\begin{document}

\newtheorem{thm}{Theorem}[section]
\newtheorem{lem}[thm]{Lemma}
\newtheorem{prop}[thm]{Proposition}
\newtheorem{coro}[thm]{Corollary}
\newtheorem{defn}[thm]{Definition}
\newtheorem*{remark}{Remark}

\numberwithin{equation}{section}

\newcommand{\Z}{{\mathbb Z}} 
\newcommand{\Q}{{\mathbb Q}}
\newcommand{\PP}{{\mathbb P}}
\newcommand{\R}{{\mathbb R}}
\newcommand{\C}{{\mathbb C}}
\newcommand{\N}{{\mathbb N}}
\newcommand{\FF}{{\mathbb F}}
\newcommand{\T}{{\mathbb T}}
\newcommand{\fq}{\mathbb{F}_q}
\newcommand{\IS}{{\mathbb S}}

\newcommand{\fixmehidden}[1]{}

\def\scrA{{\mathcal A}}
\def\cB{{\mathcal B}}
\def\Eps{{\mathcal E}}
\def\cI{{\mathcal I}}
\def\scrD{{\mathcal D}}
\def\cF{{\mathcal F}}
\def\cL{{\mathcal L}}
\def\cM{{\mathcal M}}
\def\cN{{\mathcal N}}
\def\cP{{\mathcal P}}
\def\scrC{{\mathcal C}}
\def\scrR{{\mathcal R}}
\def\scrS{{\mathcal S}}

\newcommand{\rmk}[1]{\footnote{{\bf Comment:} #1}}

\renewcommand{\mod}{\;\operatorname{mod}}
\newcommand{\ord}{\operatorname{ord}}
\newcommand{\TT}{\mathbb{T}}
\renewcommand{\i}{{\mathrm{i}}}
\renewcommand{\d}{{\mathrm{d}}}
\renewcommand{\^}{\widehat}
\newcommand{\HH}{\mathbb H}
\newcommand{\Vol}{\operatorname{vol}}
\newcommand{\area}{\operatorname{area}}
\newcommand{\tr}{\operatorname{tr}}
\newcommand{\norm}{\mathcal N} 
\newcommand{\intinf}{\int_{-\infty}^\infty}
\newcommand{\ave}[1]{\left\langle#1\right\rangle} 
\newcommand{\E}{\mathbb E}
\newcommand{\Var}{\operatorname{Var}}
\newcommand{\Cov}{\operatorname{Cov}}
\newcommand{\Prob}{\operatorname{Prob}}
\newcommand{\sym}{\operatorname{Sym}}
\newcommand{\disc}{\operatorname{disc}}
\newcommand{\CA}{{\mathcal C}_A}
\newcommand{\cond}{\operatorname{cond}} 
\newcommand{\lcm}{\operatorname{lcm}}
\newcommand{\Kl}{\operatorname{Kl}} 
\newcommand{\leg}[2]{\left( \frac{#1}{#2} \right)}  
\newcommand{\id}{\operatorname{id}}
\newcommand{\beq}{\begin{equation}}
\newcommand{\eeq}{\end{equation}}
\newcommand{\bsp}{\begin{split}}
\newcommand{\esp}{\end{split}}
\newcommand{\bra}{\left\langle}
\newcommand{\ket}{\right\rangle}
\newcommand{\diam}{\operatorname{diam}}
\newcommand{\supp}{\operatorname{supp}}
\newcommand{\dist}{\operatorname{dist}}
\newcommand{\sgn}{\operatorname{sgn}}
\newcommand{\inte}{\operatorname{int}}
\newcommand{\ind}{\operatorname{ind}}
\newcommand{\Spec}{\operatorname{Spec}}
\newcommand{\sumstar}{\sideset \and^{*} \to \sum}

\newcommand{\LL}{\mathcal L} 
\newcommand{\sumf}{\sum^\flat}
\newcommand{\Hgev}{\mathcal H_{2g+2,q}}
\newcommand{\USp}{\operatorname{USp}}
\newcommand{\conv}{*}
\newcommand{\CF}{c_0} 
\newcommand{\kerp}{\mathcal K}

\newcommand{\gp}{\operatorname{gp}}
\newcommand{\Area}{\operatorname{Area}}

\title[A Liouville-type theorem] 
{A Liouville-type theorem for Schr\"odinger equations with nonnegative potential} 
  
\author{Henrik Uebersch\"ar}
\address{Sorbonne Universit\'e and Universit\'e Paris Cit\'e, CNRS, IMJ-PRG, F-75005 Paris, France.}
\email{henrik.ueberschar@imj-prg.fr}
\date{\today}

\date{\today}
\maketitle

\begin{abstract}
Let $u$ be a solution  of $\Delta u=Vu$ on $\R^d$, where $V\in C^0(\R^d)$ be nonnegative and bounded. We prove that the condition
$$\int_{r_j\leq|x|\leq r_j+1}|u(x)|^2dx\to 0,$$ along any sequence $(r_j)$, $r_j\nearrow+\infty$, implies $u\equiv 0$ on $\R^d$. In particular, this implies the Landis
conjecture for solutions satisfying a sufficiently fast algebraic decay.
These results are generalized to exterior domains as well as for a class of nonlinear Schr\"odinger equations under suitable conditions on the zero set of the potential.
\end{abstract}

\section{Introduction} 
Consider a solution of the Schr\"odinger equation
\beq\label{Schr}
(-\Delta+V)u=0
\eeq
on $\R^d$, where $V$ is a bounded, nonnegative potential and $\Delta=\sum_{j=1}^d \partial_{x_i}^2$ denotes the Laplacian. 
We will consider both the linear case ($V=V(x)$), as well as the non-linear case ($V=V(x,u)$).

A general question in the theory of unique continuation at infinity of solutions of linear PDE is the following: Assuming that there is a solution $u$ of \eqref{Schr} on $\R^d$, under which assumptions on the decay of $|u(x)|$, as $|x|\to+\infty$, may we conclude that $u\equiv 0$ on $\R^d$? This question is closely related to Landis' conjecture which was first proposed by E. M. Landis in the 1950s \cite{Lan53,Lan71} and which asserts that a sufficiently fast exponential decay should be sufficient to imply triviality. 

The special case of nonnegative bounded potentials is of interest in the study of disordered quantum systems and has attracted considerable attention since the work of Bourgain-Kenig on Anderson localization in the Anderson-Bernoulli model in 2005 \cite{BouKen05,KSW15,DKW19,SiSou21,PiDa24,PiDaK25}. We note that, due to nonnegativity of the operator $-\Delta+V$, there should be no bound states, hence it is natural to expect an algebraic decay rather than the exponential decay threshold in the sign-changing case.

In 2015, Kenig-Silvestre-Wang proved a quantitative form of the Landis conjecture for nonnegative potentials in $d=2$ in the sense that, for any nontrivial solution with $u(0)=1$, satisfying an exponential growth condition, they gave a lower bound : $$\inf_{|x_0|=R}\sup_{|x-x_0|\leq 1}|u(x)|\geq \exp(-cR\log R)$$ for $R$ large enough. 

In 2020, Logunov-Malinnikova-Nadirashvili-Nazarov \cite{LMNN24} proved a quantitative lower bound for nontrivial solutions of order $\exp(-cR(\log R)^{3/2})$ for general bounded potentials in $d=2$ which was subsequently generalized for variable coefficients in \cite{LBSou23}. We also note that the conjecture is false for complex-valued potentials, where Meshkov \cite{Me92} showed that nontrivial solutions exist which satisfy a decay of order $e^{-c|x|^{4/3}}$.

\section{Statement of results}
We prove the following result in the linear case.
\begin{thm}\label{alg-cont}
Let $V\in C^0(\R^d)$ be bounded and nonnegative. Let $u$ be a solution of $\Delta u=Vu$ on $\R^d$. Assume that there exists a sequence $(r_j)_{j=0}^{+\infty}\nearrow+\infty$, s. t.
\beq\label{crit}
\int_{r_j\leq |x|\leq r_j+1}|u|^2\to 0, \quad \text{as}\; j\to+\infty.
\eeq
Then $u\equiv 0$ on $\R^d$.
\end{thm}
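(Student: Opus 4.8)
The plan is to use the nonnegativity of $V$ to show that $|u|^2$ is subharmonic, and then to contradict \eqref{crit} via the monotonicity of spherical averages. First I would record the key identity. Writing $u=a+\i b$ with $a,b$ real-valued, linearity of \eqref{Schr} gives $\Delta a=Va$ and $\Delta b=Vb$, whence a direct computation yields
\[
\Delta|u|^2=2V|u|^2+2|\nabla u|^2\ge 0,
\]
where $|\nabla u|^2=|\nabla a|^2+|\nabla b|^2$ and the inequality uses $V\ge0$. Since $V$ is bounded and continuous, elliptic regularity gives $u\in C^{1,\alpha}_{\mathrm{loc}}$, so $w:=|u|^2$ is continuous and the identity holds in $\mathcal D'(\R^d)$; in particular $w$ is a nonnegative subharmonic function on $\R^d$.

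Next I would invoke the standard monotonicity of spherical averages for subharmonic functions. Let $\omega_{d-1}$ denote the surface area of the unit sphere and set
\[
A(r):=\frac{1}{\omega_{d-1}r^{d-1}}\int_{|x|=r}|u|^2\,d\sigma.
\]
Differentiating under the integral sign and applying the divergence theorem gives $A'(r)=(\omega_{d-1}r^{d-1})^{-1}\int_{|x|\le r}\Delta w\,dx\ge0$, so $A$ is nondecreasing on $(0,\infty)$. When $u$ is only $C^{1,\alpha}$ this is justified in the distributional sense, the spherical means of a subharmonic function always being monotone.

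Finally I would derive the contradiction. Suppose $u\not\equiv0$. By continuity the set $\{u\ne0\}$ is open and nonempty, so there is $r_0>0$ with $A(r_0)=:c>0$, and monotonicity forces $A(s)\ge c$ for all $s\ge r_0$. Passing to polar coordinates,
\[
\int_{r_j\le|x|\le r_j+1}|u|^2\,dx=\omega_{d-1}\int_{r_j}^{r_j+1}A(s)\,s^{d-1}\,ds\ge c\,\omega_{d-1}\int_{r_j}^{r_j+1}s^{d-1}\,ds\ge c\,\omega_{d-1}\,r_j^{d-1}
\]
for every $r_j\ge r_0$. For $d\ge2$ the right-hand side tends to $+\infty$, while for $d=1$ it is bounded below by the positive constant $2c$; either way it cannot tend to $0$, contradicting \eqref{crit}. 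Hence $u\equiv0$.

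I expect the only genuine subtlety to be the regularity bookkeeping: since $V$ is merely continuous, $u$ need not be classically $C^2$, so the subharmonicity identity and the monotonicity of $A(r)$ should be phrased distributionally (or established after a mollification of $V$). The conceptual heart of the argument, namely that $V\ge0$ makes $|u|^2$ subharmonic and that a nontrivial nonnegative subharmonic function cannot have spherical averages decaying along any sequence of radii, is otherwise elementary.
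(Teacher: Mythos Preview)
Your argument is correct and follows a genuinely different route from the paper. The paper localizes with a smooth cutoff $\chi$ supported in a large ball, combines the energy identity $\int |\nabla(\chi u)|^2 = -\int \chi u\,\Delta(\chi u)$ with a Caccioppoli inequality and a covering argument to bound the commutator terms by $C\int_{r_j \le |x| \le r_j+1} |u|^2$, and deduces $\int_B V|u|^2 = 0$ for every ball $B$; since $V>0$ on some ball this forces $u$ to vanish there, and Carleman's unique continuation theorem then gives $u\equiv 0$. Your subharmonicity argument is more elementary and self-contained: it bypasses Caccioppoli, the covering lemma, and unique continuation altogether, handles the harmonic case $V\equiv 0$ uniformly, and in fact yields the sharper quantitative statement $\int_{R \le |x| \le R+1} |u|^2 \gtrsim R^{d-1}$ for any nontrivial solution, improving on the constant lower bound of Corollary~\ref{alg-quant}. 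The paper's route, by contrast, isolates the intermediate conclusion $\int_B V|u|^2=0$ as Proposition~\ref{main-prop}, which it can reuse verbatim for the nonlinear equation $\Delta u=V(u)u$ and for exterior Dirichlet problems; the monotonicity of spherical averages about the origin is less immediately portable to the exterior-domain setting, where full spheres centred at the origin need not lie in $\Omega$.
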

\begin{remark}
We point out the following observations:
\begin{itemize}
\item[(i)]{This result implies Landis' conjecture for continuous nonnegative potentials in any dimension: 
what is noteworthy is that a sufficiently fast {\bf algebraic decay} of the solutions along {\bf any sequence of annuli} of constant width is sufficient in this case. 

For comparison: the conditions in Thm 1.1. in \cite{PiDa24} require a global decay $O(|x|^{(2-d)/2})$, $d\geq 2$, as well as a super-exponential decay $|u(x_n)|\leq Ce^{-|x_n|}|x_n|^{(1-d)/2}$ along a sequence $(x_n)$, s. t. $|x_n|\to+\infty$.}\\

\item[(ii)]{{\bf No global decay condition} for  the solution $u$ on $\R^d$ is needed at all. This is very different from the general case of the conjecture (bounded, possibly sign-changing potentials) where the threshold is an exponential decay: $|u(x)|\leq e^{-c|x|}$ for any $x\in\R^d$ and $c>0$ large enough.}\\

\item[(iii)]{{\bf Exterior domains:} An analogue of the above result may also be derived for exterior domains $\Omega\subset\R^d$, where $\Omega$ is an unbounded subset of $\R^d$ with piecewise smooth boundary. We refer the reader to section \ref{sec-ext} for a sketch of the proof.
}
\end{itemize}

\end{remark}

We have the following corollary of Theorem \ref{alg-cont} which gives a quantitative algebraic lower bound for a nontrivial solution. 
\begin{coro}\label{alg-quant}
Let $u$ be a nontrivial solution of $\Delta u=Vu$ on $\R^d$, where $V\in C^0(\R^d)$ and $V\geq0$. Then there exists an absolute constant $c>0$ s. t. for any $R\in\N$ 
$$
\int_{R\leq |x|\leq R+1} |u(x)|^2\geq c.
$$
\end{coro}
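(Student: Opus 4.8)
The plan is to read off Corollary \ref{alg-quant} as the quantitative contrapositive of Theorem \ref{alg-cont}. For $R\in\N$ write $a_R=\int_{R\leq|x|\leq R+1}|u(x)|^2\,dx$ and set $c=\inf_{R\in\N}a_R\geq 0$; the goal is to prove $c>0$, so I argue by contradiction and suppose $c=0$.

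I would split into two cases. First, if $a_{R^*}=0$ for some fixed $R^*\in\N$, then $u$ vanishes on the open annulus $\{R^*<|x|<R^*+1\}$; since $V$ is bounded, the elliptic equation $\Delta u=Vu$ enjoys the strong unique continuation property, whence $u\equiv 0$ on $\R^d$, contradicting nontriviality. Second, if $a_R>0$ for every $R$ but nevertheless $\inf_R a_R=0$, then no finite set of indices can realize the infimum, because the minimum of finitely many positive numbers is positive. Consequently there are infinitely many distinct integers along which $a_R$ becomes arbitrarily small, and such a set of nonnegative integers is unbounded, so I can extract an increasing sequence $r_j\nearrow+\infty$ with $a_{r_j}\to 0$, i.e.\ $\int_{r_j\leq|x|\leq r_j+1}|u|^2\to 0$.

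This is precisely hypothesis \eqref{crit} of Theorem \ref{alg-cont}, whose conclusion then forces $u\equiv 0$, again contradicting nontriviality. Hence $c=\inf_R a_R>0$, which is the asserted uniform lower bound (with the resulting constant $c=c(u)$ depending on the solution $u$, as it must, since scaling $u$ scales the integrals). I do not expect a serious obstacle here, as essentially all the content has been delegated to Theorem \ref{alg-cont}; the only point requiring genuine care is the degenerate case $a_{R^*}=0$, which cannot be handled by Theorem \ref{alg-cont} directly — a single vanishing annulus does not furnish a sequence $r_j\nearrow+\infty$ — and instead must be dispatched via unique continuation for the elliptic equation.
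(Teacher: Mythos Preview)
Your proposal is correct and follows essentially the same route as the paper: argue by contradiction, dispose of the degenerate case $a_{R^*}=0$ via unique continuation (the paper invokes Carleman's theorem here), and in the remaining case extract an increasing sequence $r_j\nearrow+\infty$ with $a_{r_j}\to 0$ so as to feed into Theorem~\ref{alg-cont}. The only cosmetic difference is that the paper organizes the dichotomy by first picking $R_j$ with $I(R_j)<\epsilon_j\searrow 0$ and then arguing that $(R_j)$ cannot stay bounded (the bounded case collapsing to your Case~1), whereas you split the two cases up front; the content is identical.
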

\begin{proof}
See section \ref{sec-coro}.
\end{proof}

In the non-linear case we have the following result.
\begin{thm}\label{alg-nonl}
Let $V\in C^0(\R)$ be nonnegative and assume that its zero set $Z_V:=\{x\in\R \mid V(x)=0\}$ is an isolated point set. Let $u$ be a real-valued, bounded solution of $\Delta u=V(u)u$. If $u$ satisfies  criterion \eqref{crit}, then $u\equiv 0$ on $\R^d$.
\end{thm}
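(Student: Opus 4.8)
The plan is to reduce the nonlinear equation to the linear Theorem~\ref{alg-cont} by freezing the solution inside the potential. I would fix the given bounded, real-valued solution $u$ and introduce the \emph{effective potential}
\[
W(x):=V(u(x)),\qquad x\in\R^d.
\]
Real-valuedness of $u$ is precisely what makes $W=V\circ u$ well defined. Since $u$ is continuous and bounded, its range lies in a compact interval $[-M,M]$; since $V\in C^0(\R)$ is nonnegative, the composition $W$ is continuous, nonnegative, and bounded (by $\max_{[-M,M]}V$) on $\R^d$. By construction $u$ solves the \emph{linear} equation $\Delta u=Wu$, so the pair $(W,u)$ satisfies every hypothesis of Theorem~\ref{alg-cont}, and $u$ obeys criterion \eqref{crit} by assumption. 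Theorem~\ref{alg-cont} then applies verbatim and yields $u\equiv 0$.

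For a self-contained treatment I would also record the mechanism directly in the nonlinear setting, since it is this subharmonicity that powers Theorem~\ref{alg-cont}. The point is that $V\ge 0$ forces $u^2$ to be subharmonic: using that $u$ is real-valued,
\[
\Delta(u^2)=2|\nabla u|^2+2u\,\Delta u=2|\nabla u|^2+2\,V(u)\,u^2\ \ge\ 0,
\]
both terms on the right being nonnegative. Hence the spherical mean $A(r)$ of $|u|^2$ over the sphere of radius $r$ centered at the origin is nondecreasing in $r$. Criterion \eqref{crit} gives $\int_{r_j\le|x|\le r_j+1}|u|^2\to 0$; by monotonicity this annular integral is at least $A(r_j)$ times the volume of the annulus, which stays bounded below by a positive constant, so $A(r_j)\to 0$. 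Then $A\ge 0$ together with monotonicity forces $A\equiv 0$, i.e. $u\equiv 0$.

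The main obstacle I anticipate is regularity, not the reduction itself. Because $V$ is only assumed continuous, the coefficient $W=V\circ u$ is merely $C^0$, so $u$ is a priori only $C^{1,\alpha}$ and the identity $\Delta(u^2)=2|\nabla u|^2+2V(u)u^2$ must be justified weakly rather than classically; this is legitimate since $V(u)u$ is bounded, so $L^p$ elliptic estimates place $u$ in $W^{2,p}_{\mathrm{loc}}\cap L^\infty$ and the distributional computation of $\Delta(u^2)$ is then valid. This is where I expect the genuine care to be needed. Finally, I would pin down exactly where the hypothesis that $Z_V$ is an isolated point set enters: the monotone-mean argument above uses only $V\ge 0$ and boundedness of $u$, so I suspect the isolated-zero-set condition serves to control the degeneracy set $\{x:u(x)\in Z_V\}$ on which $W$ vanishes---for instance to run a strong maximum principle or to secure the regularity feeding the weak subharmonicity---rather than being essential to the averaging step; clarifying whether it is truly needed or merely technical is the point I would scrutinize most carefully.
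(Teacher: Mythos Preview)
Your proposal is correct, and both routes you sketch work. They are, however, genuinely different from what the paper does, and in fact stronger.

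The paper does not freeze the potential and invoke Theorem~\ref{alg-cont}; instead it applies Proposition~\ref{main-prop} (the cutoff/Caccioppoli machinery, stated for general $V(x,u)$) to obtain $\int_B V(u)u^2=0$ on every ball, hence $V(u(x))u(x)^2=0$ pointwise, so $u(x)\in\{0\}\cup Z_V$ for every $x$. It is precisely here that the paper uses the hypothesis that $Z_V$ is an isolated point set: continuity of $u$ then forces $u$ to be constant, and criterion~\eqref{crit} makes the constant zero. So in the paper the isolated-zeros assumption is load-bearing for the endgame, not a regularity crutch.

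Your subharmonicity argument bypasses all of this: $\Delta(u^2)\ge 0$ needs only $V\ge 0$, the monotonicity of spherical means plus~\eqref{crit} yields $u\equiv 0$ directly, and neither the Caccioppoli lemma, nor Carleman unique continuation, nor the isolated-zeros hypothesis is used. Your freezing argument (approach~1) likewise avoids the $Z_V$ hypothesis except in the borderline case $W\equiv 0$, which you should flag separately: there $u$ is harmonic and the same mean-value monotonicity finishes it. The upshot is that your suspicion is right on the nose---the condition on $Z_V$ is an artifact of the paper's particular endgame and can be dropped; your direct argument actually proves a sharper theorem with less machinery. The only point to be scrupulous about, as you note, is that with $V$ merely $C^0$ one has $u\in W^{2,p}_{\mathrm{loc}}$, so the identity $\Delta(u^2)=2|\nabla u|^2+2V(u)u^2$ and the monotonicity of averages must be read in the weak sense; this is routine.
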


{\bf Acknowledgements:} The author would like to thank N. Lerner for many fruitful discussions on the subject of Liouville theorems for solutions of linear and non-linear elliptic PDE, as well as unique continuation theorems for solutions of elliptic PDE. Moreover, the author thanks both N. Lerner and S. Nonnenmacher for many helpful comments on an earlier draft of this article.

\section{Proofs of Theorems \ref{alg-cont} and \ref{alg-nonl}}\label{sec-pos}

Let $V:\R^d\times\R\mapsto\R$ and assume $V$ is bounded, nonnegative and continuous w.r.t. $x\in\R^d$ and $u\in\R$. Let us consider a solution $u$ to the non-linear partial differential equation
 \beq\label{pde}
 (-\Delta+V(\cdot,u))u=0,
 \eeq
 where we denote by $\Delta=\sum\partial_{x_i}^2$ the Laplacian on $\R^d$. 
 
 We have the following proposition which will be key in the proof.
 \begin{prop}\label{main-prop}
 Assume that a real-valued\footnote{We assume real-valuedness here, because the nonnegative potential $V(x,u)$ is only defined for $u\in\R$.} solution $u$ of \eqref{pde} satisfies the following condition: there exists $(r_j)_{j=0}^{+\infty}\subset\R_+$, $r_j\nearrow +\infty$, s. t.
 \beq
 \lim_{j\to+\infty}\int_{r_j\leq|x|\leq r_j+1}u^2 = 0.
 \eeq
Then we have for any ball $B(x_0,R)\subset\R^d$ that
 \beq
 \int_{B(x_0,R)} V(x,u(x))u(x)^2dx=0.
 \eeq
 \end{prop}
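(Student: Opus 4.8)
The plan is to derive a Caccioppoli-type energy inequality by testing the equation against $\chi^2 u$ for a well-chosen cutoff $\chi$, and then to exploit the hypothesis by arranging that $\nabla\chi$ is supported precisely on the thin annuli $r_j\le|x|\le r_j+1$. I would begin with a reduction: since $u$ is bounded and continuous and $V$ is nonnegative, bounded and continuous in both arguments, the coefficient $W(x):=V(x,u(x))$ is itself a nonnegative, bounded, continuous function of $x$ alone. Thus for the purposes of this proposition the nonlinear problem is no different from the linear equation $\Delta u=Wu$ with $W\ge 0$, and the nonlinearity plays no role here. By elliptic regularity $u\in W^{2,p}_{\mathrm{loc}}(\R^d)$ for every $p$ (in particular $u\in H^1_{\mathrm{loc}}$), which will justify the integrations by parts below.

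Next I would set up the energy estimate. Let $\chi\in C_c^\infty(\R^d)$. Multiplying $\Delta u=Wu$ by $\chi^2 u$ and integrating over $\R^d$ (all boundary terms vanish because $\chi$ has compact support), one integration by parts gives
$$\int_{\R^d}\chi^2 W u^2+\int_{\R^d}\chi^2|\nabla u|^2=-2\int_{\R^d}\chi u\,\nabla\chi\cdot\nabla u.$$
Applying Young's inequality to the right-hand side in the form $2|\chi u\,\nabla\chi\cdot\nabla u|\le\tfrac12\chi^2|\nabla u|^2+2u^2|\nabla\chi|^2$, the gradient term is absorbed into the left, and after dropping the remaining nonnegative gradient integral I obtain the Caccioppoli-type bound
$$\int_{\R^d}\chi^2 W u^2\;\le\;2\int_{\R^d}u^2|\nabla\chi|^2.$$

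The final step is the choice of cutoffs. Fixing the ball $B(x_0,R)$, I would take $\eta_j:[0,\infty)\to[0,1]$ smooth with $\eta_j\equiv 1$ on $[0,r_j]$, $\eta_j\equiv 0$ on $[r_j+1,\infty)$ and $|\eta_j'|\le 2$, and set $\chi_j(x):=\eta_j(|x|)$. Then $|\nabla\chi_j|\le 2$, supported in the annulus $r_j\le|x|\le r_j+1$, so the right-hand side above is at most $8\int_{r_j\le|x|\le r_j+1}u^2$, which tends to $0$ by the assumed decay. For $j$ large enough that $B(x_0,R)\subset\{|x|\le r_j\}$ one has $\chi_j\equiv 1$ on $B(x_0,R)$, and since $Wu^2\ge 0$,
$$0\le\int_{B(x_0,R)}W u^2\le\int_{\R^d}\chi_j^2 W u^2\le 8\int_{r_j\le|x|\le r_j+1}u^2\longrightarrow 0.$$
As the left-hand side is independent of $j$, it must vanish, which is exactly the claim.

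The one point requiring care is the integration by parts: with only $V\in C^0$ the solution need not be classically $C^2$, so I would phrase the computation at the level of the weak (distributional) formulation, using that $u\in H^1_{\mathrm{loc}}$ and that $\chi_j^2 u$ is an admissible test function. Beyond this regularity bookkeeping the argument is the standard energy estimate; the genuinely new and essential ingredient is that the constant-width annuli permit cutoffs with uniformly bounded gradient, so that the error term is controlled by precisely the quantity the hypothesis forces to zero.
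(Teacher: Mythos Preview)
Your proof is correct, and in fact cleaner than the paper's. The paper sets $v=\chi u$, writes $\int|\nabla v|^2=-\int v\,\Delta v$, and expands $\Delta v$; this produces a term $\int(\nabla\chi\cdot\nabla u)\chi u$ which the paper estimates by Cauchy--Schwarz, leaving a factor $\bigl(\int_A|\nabla u|^2\bigr)^{1/2}$ on the annulus $A$. To control this the paper invokes a separate local Caccioppoli lemma on balls $B(x_i,1/4)$ and a finite-overlap covering of $A$, finally obtaining $\int_A|\nabla u|^2\lesssim\int_{r\le|x|\le r+1}u^2$. By testing directly against $\chi^2 u$ and absorbing via Young, you bypass both the auxiliary Caccioppoli lemma and the covering argument entirely: the term $\int\chi^2|\nabla u|^2$ is already sitting on the left-hand side ready to absorb the cross term. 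What the paper's route buys is perhaps a clearer separation of the two ideas (the global cutoff and the local interior estimate), but your argument gets to the conclusion in one stroke.

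Two small remarks. First, you write ``since $u$ is bounded'' in the reduction to $W(x)=V(x,u(x))$, but the proposition does not assume this; fortunately you do not actually need it, since $V$ itself is assumed bounded on all of $\R^d\times\R$, so $W\in L^\infty$ regardless. Second, your caution about regularity is well placed: the paper asserts $u\in C^2$, but with only $V\in C^0$ one generally gets $u\in W^{2,p}_{\mathrm{loc}}$ for all $p$ (hence $C^{1,\alpha}$), which is exactly what your weak formulation uses.
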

 
 
\begin{proof}
Let $r\geq1$. We introduce a smooth indicator function $0\leq \chi \leq 1$, where $\chi=1$ on $B(0,r+3/8)$ and $\chi=0$ on $\R^d\setminus B(0,r+5/8)$. In addition, we assume that there exist absolute constants $k_1,k_2>0$ s. t. $|\nabla\chi|\leq k_1$, $|\Delta\chi|\leq k_2$ for any $r\geq 1$ (an explicit construction and computation of $k_1,k_2$ is given in subsection \ref{subsec-cutoff}).
 
 We introduce the smooth cutoff $v=\chi u$ which satisfies (due to compact support, and we note that by elliptic regularity we have $u\in C^2(\R^d)$)
 \begin{equation}
0\leq \int_{\R^d}|\nabla v|^2 = -\int_{\R^d} v\Delta v
 \end{equation}
 and from the identity
 $$\Delta v=(\Delta\chi)u+2\nabla \chi \cdot \nabla u +\chi \Delta u$$
 we get the inequality
 \beq\label{main-ineq}
 \int_{\R^d}Vv^2\leq -\int_{\R^d}(\Delta\chi)u v-2\int_{\R^d}(\nabla \chi \cdot \nabla u) v
 \eeq
 where we used $\Delta u=Vu$ and moved the integral on the l.h.s.
 
Let $$A_{r,w}:=\{x\in\R^d \mid r-w\leq|x|\leq r+w\}.$$ We estimate the first term on the r.h.s. by (recall $0\leq\chi\leq1$)
 \beq\label{int-est-1}
 k_2\int_{A_{r+1/2,1/8}}u^2
 \eeq
 and the second term by
 \beq\label{int-est-2}
 k_1\int_{A_{r+1/2,1/8}}|\nabla u||u|\leq k_1\left(\int_{A_{r+1/2,1/8}}u^2\right)^{1/2}\left(\int_{A_{r+1/2,1/8}}|\nabla u|^2\right)^{1/2}.
 \eeq

We will make use of the following inequality which is due to Caccioppoli and whose proof we include in subsection \ref{subsec-cacc}.
\begin{lem}\label{Cacc}
Let $u$ be a solution to \eqref{pde}. If $V(x,u)$ depends on $u$, then assume $|u(x)|\leq C$ for any $x\in\R^d$, $V(\cdot,u)\in L^\infty(\R^d)$ and that $V$ is continuous with respect to $u$. If $V(x,u)=V(x)$, then we assume only $V\in L^\infty(\R^d)$. 

Under these assumptions we have for any $x_0\in\R^d$
\beq\label{local-L2}
\int_{B(x_0,1/4)}|\nabla u|^2 \leq C(V)\int_{B(x_0,1/2)}u^2,
\eeq
where $C(V)$ denotes a positive constant which depends only on the potential $V$.
\end{lem}

We may choose a finite set of points $\scrC_r\subset\IS^{d-1}_{r+1/2}$  s. t.
\beq
A_{r+1/2,1/8}\subset \bigcup_{x_i\in\scrC} B(x_i,1/4) 
\eeq
and there exists an absolute constants $C_d>0$ such that any ball $B(x_i,1/2)$ with $x_i\in\scrC_r$ overlaps only with at most $C_d$ other balls $B(x_j,1/2)$ with $x_j\in\scrC_r$. In particular, this constant does not depend on the radius $r$.

Hence we have, by the above inclusion and an application of Lemma \ref{Cacc},
\beq\label{est0}
\int_{A_{r+1/2,1/8}}|\nabla u|^2 \leq \sum_{x_i\in\scrC_r}\int_{B(x_i,1/4)}|\nabla u|^2 \leq C(V)\sum_{x_i\in\scrC_r}\int_{B(x_i,1/2)} u^2.
\eeq

We estimate the sum on the r.h.s. as
\beq\label{est1}
\begin{split}
\sum_{x_i\in\scrC}\int_{B(x_i,1/2)}u^2 &=\sum_{x_i\in\scrC_r}\int_{\R^d}u(x)^2 \ind(x;B(x_i,1/2))dx\\
&= \int_{\R^d}u(x)^2 h_r(x)dx
\end{split}
\eeq
where
$$h_r(x):= \sum_{x_i\in\scrC_r} \ind(x;B(x_i,1/2))$$
and as $\ind(\cdot,A):\R^d\mapsto\{1,0\}$ we denote the indicator function on a set $A\subset\R^d$:
\beq
\begin{split}
\ind(x,A):=
\begin{cases}
1, \;\text{if}\; x\in A,\\
\\
0, \;\text{otherwise.}
\end{cases}
\end{split}
\eeq

By construction, we have $h_r\leq C_d$ and, in view of \eqref{est1}, this yields
\beq
\sum_{x_i\in\scrC}\int_{B(x_i,1/2)}u^2\leq C_d\int_{\supp h_r}u^2
\eeq
Now, we have the trivial inclusion $$\supp h_r=\bigcup_{x_\i\in\scrC_r}B(x_i,1/2)\subset A_{r+1/2,1/2}$$
which yields
\beq\label{est2}
\sum_{x_i\in\scrC}\int_{B(x_i,1/2)}u^2\leq C_d\int_{A_{r+1/2,1/2}}u^2
\eeq


By combining the estimates \eqref{est0} and \eqref{est2} we get
\beq
\int_{A_{r+1/2,1/8}}|\nabla u|^2dx \leq C(V)C_d \int_{r\leq |x|\leq r+1}u^2.
\eeq
 
Let $B=B(x_0,R)$ a fixed ball in $\R^d$. In view of the inequalities \eqref{main-ineq}, \eqref{int-est-1} and \eqref{int-est-2}, we have for any $r$ large enough such that $B\subset B(0,r+3/8)$ the following inequality:
\beq
0\leq \int_B V u^2\leq \int_{\R^d}V\chi^2 u^2\leq C'\int_{r\leq |x|\leq r+1}u^2
\eeq
where we set $C'=k_1C(V)^{1/2}C_d^{1/2}+k_2$.

In particular, this holds for any $r=r_j$ for $j$ large enough.
If we now take $j\to+\infty$, then we have, by assumption, that the r.h.s. tends to $0$ which yields the result.
\end{proof}

{\bf The linear case: Proof of Theorem \ref{alg-cont}. $V(x,u)=V(x)$.}
We recall that $V\in C^0(\R^d)$ is nonnegative and $V\neq 0$. Let us consider a complex-valued solution $u$ which satisfies
$$\int_{r_j\leq|x|\leq r_j+1}|u|^2\to 0, \quad \text{as}\; j \to+\infty.$$ If we denote $u=f+ig$, where $f$ and $g$ denote the real and imaginary parts of $u$, then $f$, $g$ are real-valued solutions which satisfy
$$\int_{r_j\leq|x|\leq r_j+1}g^2, \quad \int_{r_j\leq|x|\leq r_j+1}f^2\to 0, \quad \text{as}\; j\to+\infty.$$

Because $V\neq 0$ and $V$ is nonnegative, there exists a ball $B\subset\R^d$ where $V>0$. It follows from Proposition \ref{main-prop} that
$$\int_B Vf^2=\int_B Vg^2=0$$ which implies that $f,g\equiv 0$ on $B$ and, thus, $u\equiv0$ on $B$. 

Since $u$ satisfies the inequality $|\Delta u|\leq \|V\|_{L^\infty(\R^d)}|u|$, we may apply a classical unique continuation theorem, originally due to Carleman (cf. \cite{Car39}, see also section 1.5.2 in \cite{Ler19}), and conclude that $u\equiv 0$ on $\R^d$.\\

{\bf The non-linear case: Proof of Theorem \ref{alg-nonl}. $V(x,u)=V(u)$.} 
Let us assume that the zero set $Z_V=\{y\in\R \mid V(y)=0\}$ is an isolated point set.
If $|u(x)|\leq C$, then we get for any ball $B\subset\R^d$ $$\int_B V(u)u^2=0$$ and we conclude that $u=0$ unless $u(x)\in Z_V\setminus\{0\}$. Since $u$ is continuous and $Z_V$ is an isolated point set, it follows that $u\equiv const$. In particular, the assumption $$\int_{r_j\leq |x|\leq r_j+1}u^2\to 0$$ implies $u\equiv 0$ on $\R^d$.

\section{Proof of Corollary \ref{alg-quant}}\label{sec-coro}

We want to show that there exists $\epsilon>0$ s. t. for any $R\in\N$ we have
$$I(R):=\int_{R\leq|x|\leq R+1}|u|^2\geq \epsilon.$$

We argue by contradiction. Suppose that for any $\epsilon>0$ there exists $R\in\N$ s. t. $I(R) < \epsilon$.
Choose a sequence $(\epsilon_j) \subset \R_+$, $\epsilon_j\searrow 0$. For any $\epsilon_j>0$ there exists $R_j\in\N$ s. t. $I(R_j)<\epsilon_j\searrow 0$.

Note that the sequence $(R_j)\subset\N$ obtained in this way contains a subsequence $(R_{j_k})_k$ s. t. $R_{j_k}\nearrow+\infty$, as $k\to+\infty$. To see this, assume this is not the case. Hence, $(R_j)$ is bounded and, thus, can only take a finite number of values. In particular, there exists a constant subsequence $(R_{j_l})_l$ s. t. $R_{j_l}=R$ and $I(R_{j_l})=I(R)<\epsilon_{j_l}\to 0$, as $l\to+\infty$, and thus we conclude $I(R)=0$. This implies that $u\equiv 0$ on the annulus $R\leq|x|\leq R+1$ and, by Carleman's unique continuation theorem, we have $u\equiv 0$ on $\R^d$. However, we assumed $u$ to be non-trivial, so this gives a contradiction.

Upon reordering the subsequence $(R_{j_k})_k$ we obtain a sequence $(r_k)\subset\N$, $r_k\nearrow+\infty$, s. t. $I(r_k)\to 0$. Now we may apply Theorem \ref{alg-cont} to conclude that $u\equiv 0$ on $\R^d$ which is a contradiction to our assumption of non-triviality of $u$. Hence, we have proved the claim.

\section{Exterior domains}\label{sec-ext}

Let $\Omega$ be any unbounded subset with piecewise smooth boundary. Let us consider a real-valued solution $u$ of the Dirichlet problem
\beq
\Delta u=Vu, \quad u|_{\partial\Omega}=0.
\eeq

We may define the same smooth cutoff $v=\chi u$ as in the proof of Theorem \ref{alg-cont}. Then we have the identity
\beq
\int_{\Omega}|\nabla v|^2 = -\int_{\Omega} v \Delta v 
\eeq
where we note that the boundary terms vanish due to compact support as well as due to the boundary condition $u|_{\partial\Omega}=0$.

We have $$\Delta v=\chi \Delta u + 2\nabla\chi \cdot \nabla u + (\Delta\chi) u$$
and using the fact that $u$ solves $\Delta u=Vu$ we have
\beq\label{ext-id}
\int_{\Omega} V v^2 \leq -2\int_{\Omega}(\nabla\chi \cdot \nabla u) \chi u - \int_{\Omega}(\Delta\chi) u \chi u
\eeq

If we follow the same argument using the Caccioppoli inequality as in the proof of Theorem \ref{alg-cont} then we can prove that the l.h.s. of equation \eqref{ext-id} must vanish if there exists a sequence $(r_j)$, $r_j\nearrow+\infty$, s. t.
\beq
\int_{r_j \leq|x| \leq r_j+1,\,x\in\Omega}u(x)^2dx \to 0, \quad \text{as}\; r_j\to+\infty.
\eeq

\section{Auxiliary lemmata}

\subsection{Proof of Lemma \ref{Cacc}}\label{subsec-cacc}
Take $\eta_0\in C^\infty_c(\R^d)$ s. t. $0\leq \eta_0\leq1$ and $\eta_0=1$ on $B(0,1/4)$ and $\eta_0=0$ outside $B(0,1/2)$. In particular $|\nabla\eta_0|\leq c$ for some absolute constant $c$.

Take any $x_0\in\R^d$ and define $\eta=\eta_0(\cdot-x_0)$.

We have
$$\int_{\R^d}\Delta u \,\eta^2 u =\int_{\R^d} \eta^2 Vu^2$$
and the l.h.s. equals
$$-\int_{\R^d}\nabla u \cdot \nabla(\eta^2 u)=-\int_{\R^d}\nabla u \cdot (2\eta \nabla\eta u+\eta^2 \nabla u).$$

We rearrange this as
$$\int_{\R^d}\eta^2 |\nabla u|^2=-\int_{\R^d} \eta^2 Vu^2-\int_{\R^d}\nabla u \cdot (2\eta \nabla\eta u)$$

Hence, we have the inequality
$$\int_{\R^d}\eta^2 |\nabla u|^2 \leq \int_{\R^d} \eta^2 |V|u^2+2\int_{\R^d}\eta|\nabla u| |\nabla\eta| |u|$$

We can estimate (using the inequality $2ab\leq \tfrac{1}{2}a^2+2b^2$)
$$2\int_{\R^d}\eta|\nabla u| |\nabla\eta| |u| \leq \tfrac{1}{2}\int_{\R^d}\eta^2|\nabla u|^2+2\int_{\R^d}|\nabla \eta|^2u^2$$
and obtain
$$\tfrac{1}{2}\int_{\R^d}\eta^2 |\nabla u|^2 \leq \int_{\R^d} \eta^2 |V|u^2+2\int_{\R^d} |\nabla\eta|^2 u^2.$$

Now note that $V(x,u(x))\leq \sup_{x\in\R^d, |u|\leq C}V(x,u):= C_V<+\infty$, because $V(x,u)$ is in $L^\infty$ w.r.t. $x$ and continuous with respect to $u$. Moreover, $|u(x)|\leq C$ for any $x\in\R^d$.
This yields
$$\int_{B(x_0,1/4)}|\nabla u|^2\leq (2C_V+4c^2)\int_{B(x_0,1/2)}u^2.$$

\subsection{Construction of smooth cutoff}\label{subsec-cutoff}
Fix $\phi\in C^\infty_c(\R)$ with $\phi\geq0$, $\supp\phi\subset[0,1]$ and $\int_\R \phi=1$. We first construct the cutoff on $\R_+$:
\beq
\Phi_R(s):=\int_0^s\phi(t-R)dt.
\eeq
and we see that $\Phi_R$ is smooth, increasing and $0\leq \Phi_R\leq 1$. In particular, $\Phi_R(s)=0$ if $s\in[0,R]$, $0\leq\Phi_R(s)\leq 1$ for $s\in(R,R+1)$ and $\Phi_R=1$ for $s\geq R+1$.

We define a cutoff on a ball $B(0,R)\subset\R^d$ by
\beq
\chi(x)=1-\Phi_R(|x|).
\eeq
We have $\chi=1$ on $B(0,R)$, $0\leq\chi\leq 1$ on the annulus $B(0,R+1)\setminus B(0,R)$ and $\chi=0$ on $\R^d\setminus B(0,R+1)$.

We compute
$$\partial_{x_i}\chi(x)=-\Phi_R'(|x|)\frac{x_i}{|x|}$$
and since the Laplacian in radial coordinates is given by
$$\Delta_r=\frac{d^2}{dr^2}+\frac{(d-1)}{r}\frac{d}{dr}$$
and, thus, $|\nabla\chi|\leq \sup|\Phi_R'|=\sup|\phi|$ 
and since $\chi(x)=1-\Phi_R(r)$ with $r=|x|$, we have
$$\Delta\chi(x)=-\Delta_r\Phi_R(r)=-\Phi_R''(r)-\frac{(d-1)}{r}\Phi_R'(r)$$
and, thus, $$|\Delta\chi|\leq \sup|\Phi_R''|+\frac{d-1}{R}\sup|\Phi_R'|=\sup|\phi'|+\frac{d-1}{R}\sup|\phi|,$$ where we used that $r\in[R,R+1]$, because $\Phi_R',\Phi_R''$ are supported in the annulus $R\leq|x|\leq R+1$.

\end{document}